\tikzset{
    vertex/.style = {
        circle,
        draw,
        outer sep = 3pt,
        inner sep = 3pt,
    },edge/.style = {->,> = latex'}
}
\def\rank{\mathop{\rm rank}}
\def\inertia{\mathop{\rm In}}
\newcommand{\rr}{\mathbb{R}}
\newcommand{\E}{\mathcal{E}}
\def\a{\alpha}
\def\tr{{\rm trace}}
\def\x{\widetilde{x}}
\def\M{\mathcal{M}}
\newtheorem{thm}{Theorem}[section]
\theoremstyle{definition}
\newtheorem{example}[thm]{Example}
\numberwithin{equation}{section}
\begin{document}

\title{Distance matrices perturbed by a Laplacian}

\author{\|Balaji |Ramamurthy|, Chennai,
        \|Ravindra |Bapat|, Delhi, \|Shivani |Goel|, Chennai.}

\rec { }

\dedicatory{Cordially dedicated to ...}

\abstract 
Let $T$ be a tree with $n$ vertices. To each edge of $T$, we assign a weight which is a positive definite matrix of some fixed order, say, $s$. Let $D_{ij}$ denote the sum of all the weights lying in the path connecting the vertices $i$ and $j$ of $T$. We now say that $D_{ij}$ is the distance between $i$ and $j$. Define $D:=[D_{ij}]$,
where $D_{ii}$ is the $s \times s$ null matrix and for $i \neq j$, $D_{ij}$ is the
distance between $i$ and $j$. Let $G$ be an arbitrary connected weighted graph with $n$ vertices, where each weight is a positive definite matrix of order $s$.
If $i$ and $j$ are adjacent, then define $L_{ij}:=-W_{ij}^{-1}$, where $W_{ij}$ is the weight of
the edge $(i,j)$. Define $L_{ii}:=\sum_{i \neq j,j=1}^{n}W_{ij}^{-1}$. The Laplacian of $G$ is now the 
$ns \times ns$ block matrix $L:=[L_{ij}]$. 
In this paper, we first note that $D^{-1}-L$ is always non-singular and then we   
prove that $D$ and its perturbation $(D^{-1}-L)^{-1}$ have many interesting properties in common.
\endabstract
 
\maketitle
\keywords
   Trees, Laplacian matrices, inertia, Haynsworth formula.
\endkeywords

\subjclass
05C50
\endsubjclass

\thanks
   The first author is supported by Department of science and Technology -India under the project MATRICS (MTR/2017/000342). 
\endthanks

\section{Introduction}\label{sec1}
Consider a finite, simple and undirected graph
$G=(V,\E)$, where $V$ is the set of vertices, and $\E$ is the set of edges. We write $V=\{1,\dotsc,n\}$ and 
$(i,j) \in \E$ if $i$ and $j$ are adjacent.
To an edge $(i,j)$ of $G$, we assign a weight $W_{ij}$ which is a positive definite matrix of order $s$. We now say that $G$ is a weighted graph. Define  
\begin{equation*}
V_{ij} = 
\begin{cases}
 W_{ij}^{-1}  & (i,j) \in \E \\
0_{s} & \mbox{else},
\end{cases}
\end{equation*}
where $0_s$ is the $s \times s$ null matrix. 
The Laplacian of $G$ is then the matrix

\[L(G):=
\left(
\begin{array}{cccccccc}
\sum_{k} V_{1k} & -V_{12} & -V_{13} & \cdots & -V_{1n} \\
-V_{21} & \sum_{k} V_{2k} & -V_{23} & \cdots & -V_{2n} \\
\cdots & \cdots & \cdots & \cdots & \cdots \\
-V_{n1} & -V_{n2} & -V_{n3} & \cdots & \sum_{k} V_{nk}
\end{array}
\right) .\]
Recall that a tree is a connected acyclic graph. Let $T$ be a tree with $n$-vertices.  The distance $S_{ij}$ between any two vertices $i$ and $j$ of $T$ 
is the sum of all
the weights lying in the path connecting $i$ and $j$. 
Define  
\begin{equation*}
D_{ij} = 
\begin{cases}
 S_{ij},  & i \neq j \\
0_{s} & i =j.
\end{cases}
\end{equation*}
Now the distance matrix of $T$ denoted by $D(T)$ is the $ns \times ns$ block matrix
with $(i,j)^{\rm th}$ block equal to $D_{ij}$. Distance matrices are well studied
when $s=1$, i.e., the weights are positive scalars. These matrices have a wide literature with numerous applications: see for example \cite{kirk}, \cite{fied} and references therein. Our objective in this paper is to go beyond the usual scalar case and
study much more general class of matrices for which the theory can be extended by some additional matrix theoretical techniques.

Several interesting properties of $D(T)$ and $L(T)$ are known. In addition, there are identities that connect $D(T)$ and $L(T)$. Distance matrices in this weighted set up
are introduced in \cite{bapat} and further investigated extensively in \cite{balajibapat}. Some of those important properties are listed below. These will be useful for proving our result. For brevity, we write $D=D(T)$ and $L=L(T)$.
\begin{enumerate}
\item[{\rm (P1)}]  Let $L^{\dag}$ be the Moore-Penrose inverse of $L$. Then, 
\[D_{ij}=L^{\dag}_{ii} + L^{\dag}_{jj} -2 L^{\dag}_{ij}. \] 
See Theorem 3.4 in \cite{balajibapat}.
\item[{\rm (P2)}] 
Let $\delta_i$ be the degree of the vertex $i$ and $\tau$ be the column vector with $i^{\rm th}$ component equal to $2-\delta_i$. Suppose $I_s$ is the 
$s \times s$ identity matrix. Then,
\[D^{-1}=-\frac{1}{2} L+ \frac{1}{2} \Delta R^{-1} \Delta^T, \]
where $R$ is the sum of all the weights in $T$ and $\Delta:=\tau \otimes I_s$: see Theorem 3.7 in \cite{balajibapat}.

\item[{\rm (P3)}] If $J$ is the block matrix with each block equal to $I_s$, then $D$ 
is negative definite on null-space of $J$. So,
$D$ has exactly $s$ positive eigenvalues: see section 2.3 in \cite{balajibapat}.

\item[{\rm (P4)}] If $G$ is connected, then $L(G)$ is positive semidefinite, $LJ=0$ and $\rank(L)=ns-s$. This can be proved easily. Now, it follows that column space of $L$ is contained in $\M$.
\end{enumerate}

\subsection{Objective of the paper}
Let $T$ be a weighted tree and $G$ be a weighted graph with $n$ vertices. Assume $G$ is connected. As before we shall write $D$ for $D(T)$ and $L$ for $L(G)$. The blocks of
$D$ and $L$ will be $D_{ij}$ and $L_{ij}$ which are $s \times s$ matrices.
We first show in this paper that $D^{-1}-L$ is always non-singular. Define $F:=(D^{-1}-L)^{-1}$. We say that $F$ is a perturbation of $D$. By performing certain numerical experiments, we observed that any perturbation $F$ has the following properties. 
\begin{enumerate}
\item[{\rm (a)}] Each block of $F$ is positive definite.
\item[{\rm (b)}] $F$ has exactly $s$ positive eigenvalues.
\item[{\rm (c)}] $F$ is negative definite on null-space of $J$.
\end{enumerate}
Items (a), (b) and (c) are satisfied by any distance matrix $D$. Our objective in this paper is to prove (a), (b) and (c) for any perturbation $F$ of $D$. When the weights are positive scalars, Bapat, Kirkland and Neuman  established a similar result in \cite{kirk}. It can be noted that the result in this paper
is a far reaching generalization of that result.

\subsection{Notation}
We fix the following notation.
\begin{enumerate}
\item[{\rm (N1)}] We say that $G$ is an $ns \times ns$ block matrix if $G$ can be partitioned
\[ 
\left[
\begin{array}{ccccc}
G_{11} & G_{12} & \cdots & G_{1n} \\
G_{12} & G_{22} & \cdots & G_{2n} \\
\vdots & \vdots & \vdots & \vdots  \\
G_{1n} & G_{2n} & \cdots & G_{2n}
\end{array}
\right]
\]
where each $G_{ij}$ is an $s \times s$ matrix. We now write $G=[G_{ij}]$. 

\item[{\rm (N2)}] Let $I_s$ denote the identity matrix of order $s$. Fix a positive integer $n$. Now, $J$ will be the 
$ns \times ns$ block matrix $[I_s]$. 

\item[{\rm (N3)}] We use $\M$ to denote the null space of $J$. 

\item[{\rm (N4)}] Define $U:=e\otimes I_s$, where $e$ is the column vector of all ones in $\rr^{n}$.
Thus, $J$ can be written $[U,\dotsc,U]$. Let $e_i$ be the $n$-vector with $1$ in the position $i$ and zeros elsewhere and $E_i:= e_i \otimes I_s$. 

\item[{\rm (N5)}] The transpose of a matrix $A$ is denoted by $A'$. 

\item[{\rm (N6)}] If $A$ is a symmetric matrix, we use $\inertia(A)$ to denote the inertia of $A$. We write
$\inertia(A)=(n_{-}(A),n_{z}(A),n_{+}(A))$, where $n_{-}(A)$, and $n_{+}(A)$ are the number of negative and positive eigenvalues of $A$, respectively and $n_{z}(A)$
is the nullity of $A$. 

\item[{\rm (N7)}] Suppose $m$ is a positive integer. The notation
$[m]$ will denote the finite set $\{1,\dotsc,m\}$.

\item[{\rm (N8)}] If $\Delta_1,\Delta_2 \subseteq [n]$, then $G[[\Delta_1,\Delta_2]]=[X_{ij}]$ will denote the $|\Delta_1| \times |\Delta_2|$ block matrix, where $(i,j) \in \Delta_1 \times \Delta_2$. If $\Delta=\Delta_1=\Delta_2$, then we simply write
$G[[\Delta]]$ for $G[[\Delta_1,\Delta_2]]$.
\end{enumerate}

\section{Result}
We now prove our main result.
\begin{thm}\label{T:prop:Dinv-L}
Let $T$ be a weighted tree with $n$-vertices, where each weight is of order $s$. Let
$D=[D_{ij}]$ be the distance matrix of $T$. Suppose $G$ is a connected and weighted graph
with $n$-vertices, where each weight is of order $s$. Let
 $L=[L_{ij}]$ be the Laplacian matrix of $G$. For any $\beta \geq 0$, the following are true.
\begin{enumerate}
    \item[\rm(i)] $D^{-1}-\beta L$ is non-singular.
    \item[\rm{(ii)}] $\inertia{(D^{-1}-\beta L)} = (ns-s,0,s)$.
    \item[\rm{(iii)}] 
Let $i \in [n]$ and $\Delta := [n] \smallsetminus \{i\}$. Define $F:=D^{-1}-\beta L$. Then, $F[[\Delta]]$    
    is negative definite.
    \item[\rm{(iv)}] The bordered matrix $G:= \left[
{\begin{array}{rrrrrrr}
(D^{-1}-\beta L)^{-1} & U \\
U' & 0 
\end{array}}
\right]$
is non-singular and has exactly $s$ positive eigenvalues.
    \item[\rm{(v)}] $(D^{-1}-\beta L)^{-1}$ is negative semidefinite on $\M$.
    \item[\rm{(vi)}] Every block in $(D^{-1}-L)^{-1}$ is positive definite.
\end{enumerate}
\end{thm}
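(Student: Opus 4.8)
The plan is to base everything on the identity (P2). Setting $F:=D^{-1}-\beta L$, (P2) writes it as a sum we can control:
\[
F=N+P,\qquad N:=-\tfrac12 L(T)-\beta L,\qquad P:=\tfrac12\,\Delta R^{-1}\Delta',
\]
where $L(T)$ is the Laplacian of the tree and $\Delta,R$ are as in (P2). Since $L(T)$ and $L$ are Laplacians of connected graphs, (P4) gives $N\preceq 0$ with $\ker N=\ker L(T)\cap\ker(\beta L)$ equal to the column space of $U$, so $\inertia(N)=(ns-s,s,0)$; and $P\succeq 0$ with $\rank P\le s$. The key point is that $P$ is positive definite on $\ker N$: since $U'\Delta=\bigl(\sum_j(2-\delta_j)\bigr)I_s=2I_s$, every $v=Uz$ has $v'Pv=2\,z'R^{-1}z>0$ unless $z=0$. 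Then $Fv=0$ forces $v'Nv+v'Pv=0$, hence $Nv=Pv=0$, hence $v\in\ker N$ and so $v=0$; this is (i). For (ii), $F\succeq N$ gives $n_{-}(F)\le n_{-}(N)=ns-s$ by Weyl's inequality, while $F$ agrees with the negative semidefinite $N$ on $\ker P$, a subspace of dimension $\ge ns-s$, so $n_{+}(F)\le s$; nonsingularity makes both equalities, so $\inertia(F)=(ns-s,0,s)$. Statement (iv) is a border Schur complement: since $FU=PU$ and $U'PU=2R^{-1}$,
\[
\inertia\!\begin{bmatrix}F^{-1}&U\\ U'&0\end{bmatrix}=\inertia(F^{-1})+\inertia(-U'FU)=(ns-s,0,s)+(s,0,0)=(ns,0,s).
\]
And (v): with $v=Fw$ one has $v\in\M=\ker U'$ iff $U'Fw=R^{-1}\Delta'w=0$ iff $\Delta'w=0$, and then $v'F^{-1}v=w'Fw=w'Nw+w'Pw=w'Nw\le 0$.

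The substance is in (iii), which I would prove directly on the tree, assuming $\beta>0$ (at $\beta=0$ one gets only $F[[\Delta]]\preceq 0$, with kernel $\{DE_iy:y\in\rr^s\}$, so strict definiteness does need $\beta>0$). Root $T$ at $i$; let $p(j)$ be the parent of $j\ne i$ and $R_j$ the weight of $\{j,p(j)\}$, so $R=\sum_{j\ne i}R_j$. For $v$ with $v_i=0$ one first records the identity $\Delta'v=\sum_{j\ne i}(v_j-v_{p(j)})$, which follows from $\sum_j\delta_j v_j=\sum_{\{a,b\}\in T}(v_a+v_b)=\sum_{j\ne i}(v_j+v_{p(j)})$ and $v_i=0$. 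Writing $u_j:=v_j-v_{p(j)}$ and using $v'L(T)v=\sum_{j\ne i}u_j'R_j^{-1}u_j$,
\[
v'Fv=-\tfrac12\sum_{j\ne i}u_j'R_j^{-1}u_j-\beta\, v'Lv+\tfrac12\Bigl(\sum_{j\ne i}u_j\Bigr)'R^{-1}\Bigl(\sum_{j\ne i}u_j\Bigr).
\]
The matrix parallel-resistance inequality $\bigl(\sum_j u_j\bigr)'\bigl(\sum_j R_j\bigr)^{-1}\bigl(\sum_j u_j\bigr)\le\sum_j u_j'R_j^{-1}u_j$ — proved by noting that each block matrix $\begin{bmatrix}R_j&u_j\\ u_j'&u_j'R_j^{-1}u_j\end{bmatrix}$ is positive semidefinite and taking the Schur complement of their sum — then gives $v'Fv\le-\beta\,v'Lv\le 0$. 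If $v\ne 0$ and $v'Fv=0$, then $v'Lv=0$, so $v$ lies in $\ker L=$ column space of $U$, and $v_i=0$ forces $v=0$. Hence $F[[\,[n]\smallsetminus\{i\}\,]]$ is negative definite.

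Finally, (vi) (stated for $\beta=1$; the argument works for any $\beta>0$) follows formally from (ii), (iii), (v). For a diagonal block, write $F$ with block $a$ in front; by (iii), $F[[\,[n]\smallsetminus\{a\}\,]]\prec 0$, so the Haynsworth formula $\inertia(F)=\inertia\bigl(F[[\,[n]\smallsetminus\{a\}\,]]\bigr)+\inertia(S_a)$, with $\inertia(F)=(ns-s,0,s)$, forces the $s\times s$ Schur complement $S_a$ to be positive definite, whence $(F^{-1})_{aa}=S_a^{-1}\succ 0$. For an off-diagonal block, put $M:=F^{-1}$ and $q(w):=w'Mw$; the parallelogram law gives $2\,y'M_{ab}y=q(E_ay)+q(E_by)-q(E_ay-E_by)$, and because $E_ay-E_by=(e_a-e_b)\otimes y\in\M$, statement (v) makes the last term $\le 0$, so $2\,y'M_{ab}y\ge y'M_{aa}y+y'M_{bb}y>0$ for every $y\ne 0$. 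Thus every block of $(D^{-1}-L)^{-1}$ is positive definite. The only genuine obstacle is (iii): purely spectral arguments (Weyl, Cauchy interlacing on $F=N+P$) show only that $F[[\,[n]\smallsetminus\{i\}\,]]$ has at most $s$ nonnegative eigenvalues, and closing that gap of exactly $s$ is what forces the tree computation above — and is also where positivity of $\beta$ enters; the remaining items are either linear-algebra bookkeeping or consequences of (iii) together with (v).
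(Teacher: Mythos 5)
There is one genuine gap, and it sits at the very first step, part (i). You decompose $F=N+P$ with $N:=-\tfrac12 L(T)-\beta L\preceq 0$ and $P:=\tfrac12\Delta R^{-1}\Delta'\succeq 0$, suppose $Fv=0$, and argue ``$v'Nv+v'Pv=0$, hence $Nv=Pv=0$.'' That inference is invalid: the two quadratic forms have opposite signs, so their sum vanishing forces nothing. Indeed the abstract principle you are implicitly using --- $N$ negative semidefinite, $P$ positive semidefinite and positive definite on $\ker N$, therefore $N+P$ nonsingular --- is false: take $N=\mathrm{diag}(-1,0)$ and $P=I_2$; then $P$ is positive definite on $\ker N$, yet $N+P=\mathrm{diag}(0,1)$ is singular, and for $v=e_1$ one has $v'Nv=-1$, $v'Pv=1$, $Nv\neq 0$. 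Since (ii), (iv) and (v) (and hence (vi)) all rest on (i), the chain is currently unsupported. The repair is short and uses an identity you already deploy in (v): because $U'L(T)=U'L=0$ and $U'\Delta=2I_s$, one has $U'F=U'P=R^{-1}\Delta'$; so $Fv=0$ gives $\Delta'v=0$, hence $Pv=0$, hence $Nv=Fv-Pv=0$, hence $v=Uz$ for some $z\in\rr^s$, and then $0=\Delta'Uz=2z$ forces $v=0$. (The paper argues differently: from $x'(D^{-1}-\beta L)=0$ it sets $y=D^{-1}x=\beta Lx\in\M$, uses negative semidefiniteness of $D$ on $\M$ to get $x'D^{-1}x\le 0$, hence $x'Lx=0$, hence $Lx=0$ and $x=0$.)

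Once (i) is patched, the rest is correct and in several places takes a genuinely different --- and cleaner --- route than the paper. For (ii) the paper deforms $\beta$ continuously from $0$ using (i) at every intermediate value, while your Weyl-plus-dimension-count argument is equally valid. For (iii) the paper shows $L[[\,[n]\smallsetminus\{i\}\,]]\succ 0$ and invokes the Fiedler--Markham theorem ($D_{ii}=0_s$ forces $D^{-1}[[\,[n]\smallsetminus\{i\}\,]]$ to have nullity $s$, hence no positive eigenvalues); your rooted-tree computation with the matrix parallel-resistance inequality is self-contained and avoids that citation, and your observation that $\beta>0$ is genuinely needed for strict definiteness in (iii) is correct --- the paper's own proof of (iii) also tacitly uses $\beta>0$ when it adds $-L[[\Delta]]$. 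Your proof of (v) via the substitution $v=Fw$ replaces the paper's dimension-count against the bordered matrix. The largest divergence is (vi): for the off-diagonal blocks the paper builds the auxiliary matrices $G_x=[x'A_{ij}x]$, uses interlacing to show their off-diagonal entries are nonzero, and then runs a continuity-of-trace argument in $\alpha$ to pin down the sign; your parallelogram identity $2y'M_{ab}y=q(E_ay)+q(E_by)-q(E_ay-E_by)$ together with (v) gives $2y'M_{ab}y\ge y'M_{aa}y+y'M_{bb}y>0$ in two lines, and even yields the stronger quantitative statement $M_{ab}+M_{ab}'\succeq M_{aa}+M_{bb}$. That is a real simplification worth keeping.
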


\begin{proof}
\begin{enumerate}
    \item[\rm(i)] Let $x \in \rr^{ns}$ be such that
\[x'(D^{-1}-\beta L) = 0.\] 
Put $y=D^{-1}x$. Then, $y'=\beta x' L$ and so, by (P4) $y \in \M$.
By (P1), $D$ is negative semidefinite on $\M$, and hence $y'Dy \leq 0$. This implies that $x'D^{-1}x \leq 0$ and hence $x'Lx \leq 0$. As $L$ is  positive semidefinite, $x'Lx = 0$, and therefore,  $Lx=0$. This leads to $x'D^{-1}=0$ and hence $x=0$. Thus we get (i).

    \item[\rm(ii)] By (P2) and (P3), $\inertia{(D^{-1})} = (ns-s, 0, s)$. In view of (i),  $D^{-1}-\delta L$ is non-singular for any $\delta \geq 0$. Using the continuity of eigenvalues, we get $$\inertia{(D^{-1}-\beta L)} = (ns-s,0,s).$$
     
     \item[\rm(iii)] Without loss of generality, we assume $i=n$. Now
     $\Delta=\{1,\dotsc,n-1\}$. We complete the proof by showing that
      $L[[\Delta]]$ is positive definite and
     $D^{-1}[[\Delta]]$ is negative semidefinite. Let $L[[\Delta]]x = 0$ for some nonzero $x$ in $\rr^{ns-s}$.
      Define $\x \in \rr^{ns}$ by $\x:=(x,0)'$. Then, $\x^{'} L \x=0$. So, $L \x=0$. Hence by (P4), $\x$ is an element in column space of $J$. This implies that $\x$ is of the form $(p,p,\ldots,p)'$, for some $p \in \rr^s$.  
Since $\x=(x,0)'$, this means that $x=0$. Thus, $L[[\Delta]]$ is positive definite.

We now show that $D^{-1}[[\Delta]]$ is negative semidefinite.
Since $\inertia{(D^{-1})} = (ns-s, 0, s)$, by interlacing theorem $D^{-1}[[\Delta]]$ can have at most $s$ nonnegative eigenvalues. As $D_{nn} = 0$, by a Theorem of Fiedler and Markham in \cite{fied&markh}, nullity of $D^{-1}[[\Delta]]$ must be $s$. Therefore, $D^{-1}[[\Delta]]$ has no positive eigenvalues and thus, $D^{-1}[[ \Delta]]$ is negative semidefinite. Hence, $F[[ \Delta]]=D^{-1}[[\Delta]]-L[[\Delta]]$ is negative definite.

\item[\rm(iv)] Define $F:=(D^{-1}-\beta L)^{-1}$. Then, $G= \left[
{\begin{array}{rrrrrrr}
F & U \\
U' & 0 
\end{array}}
\right]$. Let the Schur complement of $F$ in $G$ be $G/F$. Since $LU=0$, we have
\begin{equation*} \label{scomp}
\begin{aligned}
G/F &=-U'F^{-1}U \\
&=-U'(D^{-1}-\beta L)U \\
&=-U'D^{-1}U. 
\end{aligned}
\end{equation*}
By Corollary $2.8$ in \cite{balajibapat},
$U'D^{-1}U$ is positive definite, and hence 
\[\inertia(G/F)=(s,0,0). \] By (ii), $\inertia(F)=(ns-s,0,s)$.
In view of Haynsworth inertia additivity formula,
\begin{equation*}
\inertia{(G / F)} = \inertia{(G)}-\inertia{(F)} ,
   \end{equation*}
and so,
\begin{equation*}
\begin{aligned}
\inertia(G) &=\inertia(G/F) + \inertia(F) \\
&= (s,0,0)+(ns-s,0,s) \\
&=(ns,0,s). \\
\end{aligned}
\end{equation*}
This completes the proof of (iv).

\item[\rm(v)] We now show that $(D^{-1}-\beta L)^{-1}$ is negative semidefinite on $\M$. If possible, let $p \in \rr^{ns}$ be such that 
\[p \in \M~~\mbox{and}~~p'(D^{-1}-\beta L)^{-1}p > 0.\] 
Now consider the following subspace of $\rr^{ns+s}$:
\[W:= \{(\alpha p,y)' : \alpha \in \rr ~\mbox{and}~ y \in \rr^s \}. \]
Let $v:=(\delta p,y)'$ be an arbitrary non-zero vector in $W$. 
As before, define \[G= \left[
{\begin{array}{rrrrrrr}
(D^{-1}-\beta L)^{-1} & U \\
U' & 0 
\end{array}}
\right].\]
Since $U'p=0$, we see that  
\begin{equation*}
\begin{aligned}
v'Gv &=\delta^2 p'(D^{-1}-\beta L)^{-1}p \\
& \geq 0.
\end{aligned}
\end{equation*}
So, $G$ is positive semidefinite on $W$. Since dimension of $W$ is $s+ 1$, $G$ will have at least $s+1$ non-negative eigenvalues. But from (iv), we see that $\inertia{(G)} = (ns,0,s)$. This is a contradiction. Hence, $(D^{-1}-\beta L)^{-1}$ is negative semidefinite on $\M$.

\item[\rm(vi)] 
Define $A:=(D^{-1}-L)^{-1}$ and let the $(i,j)^{\rm th}$ block of $A$ be written $A_{ij}$. We shall first prove that all the diagonal blocks of $A$
are positive definite. Let $H := A^{-1}$, and let $H_{ij}$ be the
$(i,j)^{\rm th}$ block of $H$.

We claim $A_{11}$ is positive definite. 
Define $\Delta:= \{2,\ldots,n\}$ and $Q:=H[[\Delta]]$. We note that 
\[H=\left[
\begin{array}{cccccc}
H_{11} & K \\
K' & Q \\
\end{array}
\right],
\]
and hence by a theorem of Fiedler and Markham \cite{fied&markh}, nullity of $Q$ and nullity of $A_{11}$ are equal. 
By (iii), $Q$ is negative definite and so, $Q$ is non-singular.
Hence, $A_{11}$ is non-singular.
Now it follows that 
\[\inertia(Q)=\inertia(A/A_{11}),\]
where $A/A_{11}$ is the Schur complement of $A_{11}$ in $A$.
In view of inertia additivity formula, (ii) and (iii), we see that 
\begin{equation*}
\begin{aligned}
\inertia(A_{11})&=\inertia(A) - \inertia(Q) \\
&=(ns-s,0,s) -(ns-s,0,0) \\
&=(0,0,s).
\end{aligned}
\end{equation*}
Thus, $A_{11}$ is positive definite. By a similar argument, we conclude that all diagonal blocks of $A$ are positive definite.

 For a non-zero vector $x \in \rr^s$, define $$G_x:= [x'A_{ij}x].$$ 
We claim that the off-diagonal entries of $G_x$ are non-zero. 
 Let $y = (y_1,y_2,\ldots,y_n)'$ be an element in $\{e\}^\perp$. For each $i \in [n]$, let $p^i:= y_ix$ and $p:= (p^1,p^2,\ldots,p^n)'$. 
As $$\sum_{i=1}^{n} p^{i}=\bigg(\sum_{i=1}^{n}y_{i}\bigg) x, $$ and $\sum_{i=1}^{n}y_{i}=0$, it follows that $Jp=0$ and hence
$p \in \M$. By (v), $A$ is negative definite on $\M$. Since
\(p'Ap = y'G_xy \), we now see that
$G_x$ is negative definite on $\{e\}^{\perp}$. 
This implies $G_x$ has at least $n-1$ negative eigenvalues. As each diagonal block $A_{ii}$ is positive definite, the diagonal entries of
$G_x$ are positive. So, $$\inertia(G_x)=(n-1,0,1).$$ By an application of interlacing theorem, we now see that all the off-diagonal entries
of $G_x$ are non-zero. This proves our claim. 

To this end, we have thus shown that if $A_{ij}$ is an off-diagonal block of $A$, then either $A_{ij}$ is positive definite or negative definite. 
We now claim that
$A_{ij}$ is positive definite.
Define $f:(0, \infty) \to \rr^{s \times s}$ by 
\[f(\alpha):= E_i' (D^{-1}-\alpha L)^{-1} E_j.\]
Note that $f(\alpha)$ is the $(i,j)^{\rm th}$ off-diagonal block of $(D^{-1}-\alpha L)^{-1}$. By a similar argument as above, we see that $f(\alpha)$ is either
positive definite or negative definite, for any $\alpha \in (0,\infty)$.
We now note that 
$$\tr(D_{ij})=\lim_{\alpha \downarrow 0}\tr(f(\a)).$$
As each block $D_{ij}$ is positive definite, $\tr(D_{ij})>0$. So, $\tr{(f(\delta))} > 0$ for some $\delta>0$. As $f(\a)$ is positive definite or negative definite for all $\a>0$, it follows that $\tr(f(\alpha))\neq 0$ for each $\alpha \in (0,\infty)$. Thus, $\tr(f(\alpha))>0$ for each $\alpha>0$ and hence $f(\alpha)$ is positive definite for all $\alpha>0$. Therefore, each block of $(D^{-1}-L)^{-1}$ is positive definite.
\end{enumerate}
\end{proof}

\subsection{Example}
To illustrate our result, we give the following example.  
\begin{example}\rm
Consider the following tree $T$ on four vertices. 
\begin{figure}[!h]
    \centering
    \begin{tikzpicture}[shorten >=1pt, auto, node distance=3cm, ultra thick,
   node_style/.style={circle,draw=black,fill=white !20!,font=\sffamily\Large\bfseries},
   edge_style/.style={draw=black, ultra thick}]
\node[vertex] (1) at  (-2,-1) {$1$};
\node[vertex] (2) at  (0,-1) {$2$};
\node[vertex] (3) at  (2,0) {$3$};
\node[vertex] (4) at  (2,-2) {$4$};
\draw  (1) edge node {$W_1$} (2);
\draw  (2) edge node {$W_2$} (3); 
\draw  (2) edge node {$W_3$} (4);
\end{tikzpicture}
\caption{Tree $T$.}\label{F:T1}
\end{figure}
Define $W_1 = 
\left[
{\begin{array}{rrrrrr}
8 & 6 \\
6 & 5
\end{array}}
\right],~ W_2 = 
\left[
{\begin{array}{rrrrrr}
1 & 1 \\
1 & 5
\end{array}}
\right],~ \text{and}~ W_3 = 
\left[
{\begin{array}{rrrrrr}
5 & 0 \\
0 & 5
\end{array}}
\right].$
Now, the distance matrix of $T$ is
    \begin{equation*}
       D= \left[
{\begin{array}{rr|rr|rr|rr}
0 & 0 & 8 & 6 & 9 & 7 & 13 & 6 \\
0 & 0 & 6 & 5 & 7 & 10 & 6 & 10 \\
\hline
 8 & 6 & 0 & 0 & 1 & 1 & 5 & 0 \\
6 & 5 & 0 & 0 & 1 & 5 & 0 & 5 \\
\hline
 9 & 7 & 1 & 1 & 0 & 0 & 6 & 1 \\
7 & 10 & 1 & 5 & 0 & 0 & 1 & 10 \\
\hline
 13 & 6 & 5 & 0 & 6 & 1 & 0 & 0 \\
6 & 10 & 0 & 5 & 1 & 10 & 0 & 0
\end{array}}
\right].
    \end{equation*}
Consider the graph $G$ on four vertices,
\begin{figure}[!h]
    \centering
    \begin{tikzpicture}[shorten >=1pt, auto, node distance=3cm, ultra thick,
   node_style/.style={circle,draw=black,fill=white !20!,font=\sffamily\Large\bfseries},
   edge_style/.style={draw=black, ultra thick}]
\node[vertex] (4) at  (-2,-1) {$4$};
\node[vertex] (3) at  (0,-1) {$3$};
\node[vertex] (1) at  (2,0) {$1$};
\node[vertex] (2) at  (2,-2) {$2$};
\draw  (1) edge node {$S_1$} (2);
\draw  (2) edge node {$S_2$} (3); 
\draw  (3) edge node {$S_4$} (4);
\draw  (3) edge node {$S_3$} (1);
\end{tikzpicture}
\caption{A connected graph $G$.}\label{F:G}
\end{figure}
where $S_1 = 
\left[
{\begin{array}{rrrrrr}
2 & 0 \\
0 & 2
\end{array}}
\right],~ S_2 = 
\left[
{\begin{array}{rrrrrr}
8 & 0 \\
0 & 8
\end{array}}
\right],~ S_3 = 
\left[
{\begin{array}{rrrrrr}
5 & -2 \\
-2 & 1
\end{array}}
\right],~ \text{and}~ S_4 = 
\left[
{\begin{array}{rrrrrr}
5 & -3 \\
-3 & 5
\end{array}}
\right].$ The Laplacian matrix $L(G)$ of $G$ is:
    \begin{equation*}
       L = \left[
{\begin{array}{rr|rr|rr|rr}
\frac{3}{2} & 2 & -\frac{1}{2} & 0 & -1 & -2 & 0 & 0 \\
2 & \frac{11}{2} & 0 & -\frac{1}{2} & -2 & -5 & 0 & 0 \\
\hline
 -\frac{1}{2} & 0 & \frac{5}{8} & 0 & -\frac{1}{8} & 0 & 0 & 0 \\
0 & -\frac{1}{2} & 0 & \frac{5}{8} & 0 & -\frac{1}{8} & 0 & 0 \\
\hline
 -1 & -2 & -\frac{1}{8} & 0 & \frac{23}{16} & \frac{35}{16} & -\frac{5}{16} & -\frac{3}{16} \\
-2 & -5 & 0 & -\frac{1}{8} & \frac{35}{16} & \frac{87}{16} & -\frac{3}{16} & -\frac{5}{16} \\
\hline
 0 & 0 & 0 & 0 & -\frac{5}{16} & -\frac{3}{16} & \frac{5}{16} & \frac{3}{16} \\
0 & 0 & 0 & 0 & -\frac{3}{16} & -\frac{5}{16} & \frac{3}{16} & \frac{5}{16}
\end{array}}
\right].
    \end{equation*}
Then, the matrix $(D^{-1}-L)^{-1}$ is: \\
\begin{equation*}
\left[
{\begin{array}{rr|rr|rr|rr}
\frac{3419893}{612184} & \frac{2467937}{612184} & \frac{3525525}{612184} & \frac{2430433}{612184} & \frac{3944573}{612184} & \frac{2285161}{612184} & \frac{4731635}{612184} & \frac{1962623}{612184} \\
\frac{2467937}{612184} & \frac{1957213}{306092} & \frac{2255293}{612184} & \frac{935945}{153046} & \frac{2218981}{612184} & \frac{1023631}{153046} & \frac{1853663}{612184} & \frac{2458795}{306092} \\
 & & & & & & & \\
\hline
 & & & & & & & \\
\frac{3525525}{612184} & \frac{2255293}{612184} & \frac{3037701}{612184} & \frac{1985813}{612184} & \frac{3651821}{612184} & \frac{2212445}{612184} & \frac{4430931}{612184} & \frac{1803363}{612184} \\
\frac{2430433}{612184} & \frac{935945}{153046} & \frac{1985813}{612184} & \frac{1566953}{306092} & \frac{2093885}{612184} & \frac{1966725}{306092} & \frac{1746783}{612184} & \frac{1159859}{153046} \\
 & & & & & & & \\
\hline
 & & & & & & & \\
\frac{3944573}{612184} & \frac{2218981}{612184} & \frac{3651821}{612184} & \frac{2093885}{612184} & \frac{3655573}{612184} & \frac{2328197}{612184} & \frac{4622251}{612184} & \frac{1831995}{612184} \\
\frac{2285161}{612184} & \frac{1023631}{153046} & \frac{2212445}{612184} & \frac{1966725}{306092} & \frac{2328197}{612184} & \frac{2026753}{306092} & \frac{1884375}{612184} & \frac{1242045}{153046} \\
 & & & & & & & \\
\hline
 & & & & & & & \\
\frac{4731635}{612184} & \frac{1853663}{612184} & \frac{4430931}{612184} & \frac{1746783}{612184} & \frac{4622251}{612184} & \frac{1884375}{612184} & \frac{3647621}{612184} & \frac{2294033}{612184} \\
\frac{1962623}{612184} & \frac{2458795}{306092} & \frac{1803363}{612184} & \frac{1159859}{153046} & \frac{1831995}{612184} & \frac{1242045}{153046} & \frac{2294033}{612184} & \frac{1968213}{306092} \\
 \\
\end{array}} 
\right].
\end{equation*}
We note that
each block in $(D^{-1}-L)^{-1}$ is positive definite, 
$\inertia{((D^{-1}-L)^{-1})} = (6,0,2)$, and 
$(D^{-1}-L)^{-1}$ is negative semidefinite on $\M$.
\end{example}

{\small
}

{\small
{\em Authors' address}:
{\em Balaji Ramamurthy}, Department of Mathematics, IIT Madras, Chennai, India.
 e-mail: \texttt{balaji5@\allowbreak iitm.ac.in} \\
 {\em Ravindra B. Bapat}, Theoretical Statistics and
Mathematics Unit, Indian Statistical Institute, Delhi, India.
 e-mail: \texttt{rbb@\allowbreak isid.ac.in}\\
{\em Shivani Goel}, Department of Mathematics, IIT Madras, Chennai, India.
 e-mail: \texttt{shivani.goel.maths@\allowbreak gmail.com}

}

\end{document}